\renewcommand{\d}{\mathrm{d}}
\newcommand{\C}{\mathbb{C}}
\newcommand{\g}{\mathfrak{g}}
\newcommand{\greg}{\mathfrak{g}_{\mathrm{reg}}}
\newcommand{\gsing}{\mathfrak{g}_{\mathrm{sing}}}
\newcommand{\ad}{\mathrm{ad}}
\newtheorem{thm}{Theorem}[section]
\newtheorem{lem}[thm]{Lemma}
\theoremstyle{definition}
\begin{document}

\title[Slodowy slices and Mishchenko-Fomenko subalgebras]
{Slodowy slices and the complete integrability of Mishchenko-Fomenko subalgebras on regular adjoint orbits} 

\author{Peter Crooks}
\author{Stefan Rosemann}
\author{Markus R\"oser}
\address{Institute of Differential Geometry\\ Gottfried Wilhelm Leibniz University Hannover\\ Welfengarten 1 \\ 30167 Hannover\\ Germany}
\email{peter.crooks@math.uni-hannover.de, stefan.rosemann@math.uni-hannover.de, markus.roeser@math.uni-hannover.de}

\subjclass[2010]{17B80 (primary); 17B63, 22E46 (secondary)}

\date{}

\dedicatory{}

\commby{}

\begin{abstract}
This work is concerned with Mishchenko-Fomenko subalgebras and their restrictions to the adjoint orbits in a finite-dimensional complex semisimple Lie algebra. In this setting, it is known that each Mishchenko-Fomenko subalgebra restricts to a completely integrable system on every orbit in general position. We improve upon this result, showing that each Mishchenko-Fomenko subalgebra yields a completely integrable system on all regular orbits (i.e. orbits of maximal dimension). Our approach incorporates the theory of regular $\mathfrak{sl}_2$-triples and associated Slodowy slices, as developed by Kostant.          
\end{abstract}

\maketitle


\section{Introduction}\label{Section: Introduction}
Mishchenko-Fomenko subalgebras are ubiquitous in both the classical and modern theories of integrable systems, and they give rise to a fruitful synergy of algebraic geometry, Lie theory, and symplectic geometry (see \cite{BolsinovRemarks,BolsinovIzosimov,BolsinovCriterion,Bolsinov,BolsinovOshemkov,Manakov,Mishchenko,KostantHessenberg,PanyushevYakimova}, for instance). One studies them in the context of a finite-dimensional Lie algebra $\mathfrak{g}$, in which case $\mathfrak{g}^*$ carries a canonical Poisson structure. Each regular element $a\in\mathfrak{g}^*$ then determines a Mishchenko-Fomenko subalgebra $\mathcal{F}_a$ of the polynomials on $\mathfrak{g}^*$, obtained by applying an argument-shifting procedure to invariants of the coadjoint representation. This Poisson-commutative subalgebra is often a completely integrable system on $\mathfrak{g}^*$, i.e. $\mathcal{F}_a$ is often complete (see \cite[Theorem 1.3]{Bolsinov}). The most classically studied cases arise when $\mathfrak{g}$ is complex semisimple, in which event every Mishchenko-Fomenko subalgebra is a completely integrable system.  

We now describe the context of interest to us. Let $\mathfrak{g}$ be a complex semisimple Lie algebra of finite dimension $n$ and rank $r$. Note that $\mathfrak{g}$ is the Lie algebra of a connected, simply-connected complex semisimple linear algebraic group $G$. Note also that we have the adjoint representation $$\mathrm{ad}:\mathfrak{g}\rightarrow\mathfrak{gl}(\mathfrak{g}),\quad x\mapsto\mathrm{ad}_x,\quad x\in\mathfrak{g},$$ which satisfies $\mathrm{ad}_x(y)=[x,y]$ for all $x,y\in\mathfrak{g}$. An element $x\in\mathfrak{g}$ is defined to be \textit{regular} if $\dim(\mathrm{ker}(\ad_x))=r$, and we shall let $\greg$ denote the open, dense, $G$-invariant subvariety of all regular elements in $\g$. An adjoint orbit $\mathcal{O}\subseteq\mathfrak{g}$ is defined to be \textit{regular} when $\mathcal{O}\subseteq\mathfrak{g}_{\text{reg}}$, or equivalently $\dim(\mathcal{O})=n-r$. 

Recall that the Killing form induces a $G$-equivariant identification of $\mathfrak{g}$ with $\mathfrak{g}^*$, through which we can transfer relevant algebraic and geometric structures (ex. the Poisson structure, Mishchenko-Fomenko subalgebras) from the latter space to the former. Note that $\mathbb{C}[\mathfrak{g}]:=\mathrm{Sym}(\mathfrak{g}^*)$ is then a Poisson algebra with Poisson centre equal to the subalgebra of all $G$-invariant polynomials, $\mathbb{C}[\mathfrak{g}]^G\subseteq\mathbb{C}[\mathfrak{g}]$. Now fix a regular element $a\in\greg$, and associate to each $f\in\mathbb{C}[\mathfrak{g}]^G$ and $\lambda\in\mathbb{C}$ the polynomial $f_{\lambda,a}\in\mathbb{C}[\mathfrak{g}]$ defined by
$$f_{\lambda,a}(x):=f(x+\lambda a),\quad x\in\g.$$ One then defines $\mathcal{F}_a$ to be the subalgebra of $\mathbb{C}[\mathfrak{g}]$ generated by all polynomials of the form $f_{\lambda,a}$, with $f\in\mathbb{C}[\mathfrak{g}]^G$ and $\lambda\in\mathbb{C}$. 

We wish to study the restriction of a Mishchenko-Fomenko subalgebra $\mathcal{F}_a$ to the symplectic leaves of $\g$, i.e. to the adjoint orbits of $G$. More precisely, let $\mathcal{F}_a\vert_{\mathcal{O}}$ denote the algebra of all functions obtained by restricting elements of $\mathcal{F}_a$ to an adjoint orbit $\mathcal{O}\subseteq\g$. This algebra is Poisson-commutative with respect to the Poisson structure on $\mathcal{O}$, so that it is natural to ask the following question: for which combinations of a regular element $a\in\mathfrak{g}$ and an adjoint orbit $\mathcal{O}\subseteq\mathfrak{g}$ is $\mathcal{F}_{a}\vert_{\mathcal{O}}$ a completely integrable system on $\mathcal{O}$?

There are a few well-known results that address the question posed above. One such result is due to Mishchenko and Fomenko, who proved that $\mathcal{F}_a\vert_{\mathcal{O}}$ is a completely integrable system whenever $a$ and $\mathcal{O}$ are in general position (see \cite[Theorem 4.2]{Mishchenko}). Their proof requires $a$ to be a regular semisimple element and $\mathcal{O}$ to be a regular adjoint orbit, so that the meaning of ``general position'' is stronger than that of ``regular'' for both elements of $\mathfrak{g}$ and adjoint orbits. With this point in mind, there are two notable generalizations of the Mishchenko-Fomenko result. The first follows from Bolsinov's work, and can be stated as follows: if $a\in\mathfrak{g}$ is any regular element, then $\mathcal{F}_a\vert_{\mathcal{O}}$ is a completely integrable system on each adjoint orbit $\mathcal{O}\subseteq\mathfrak{g}$ in general position (see \cite{BolsinovCriterion}). The second generalization derives from Kostant's work, which effectively shows $\mathcal{F}_a\vert_{\mathcal{O}}$ to be a completely integrable system whenever $a$ is a regular semisimple element and $\mathcal{O}$ is a regular adjoint orbit (see \cite[Proposition 4.7]{KostantHessenberg}). 

This paper generalizes all of the above-mentioned results as follows.    

\begin{thm}\label{Thm:MainThm}
If $a\in\mathfrak{g}$ is any regular element, then $\mathcal{F}_a\vert_{\mathcal{O}}$ is a completely integrable system on each regular adjoint orbit $\mathcal{O}\subseteq\mathfrak{g}$.
\end{thm}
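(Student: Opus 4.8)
The plan is to reduce complete integrability on a regular orbit to a pointwise linear-independence statement for the shifted differentials, and then to verify that statement at a single canonical point of each orbit furnished by a Slodowy slice.

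First I would set up the rank count. Fix homogeneous generators $p_1,\dots,p_r$ of $\C[\g]^G$ of degrees $d_1,\dots,d_r$, so that the exponents $m_i=d_i-1$ satisfy $\sum_i d_i=\tfrac{n+r}{2}$. For $f\in\C[\g]^G$ the Killing gradient $\nabla f(y)$ lies in the centralizer $\g^y$, and under the Killing identification $(\mathrm{d}f_{\lambda,a})_x=\nabla f(x+\lambda a)$. Hence the differentials of $\F_a$ at $x$ span $V_x:=\mathrm{span}\{\nabla f(x+\lambda a):f\in\C[\g]^G,\ \lambda\in\C\}$, and, using $T_x\mathcal O=[\g,x]=(\g^x)^{\perp}$, the differentials of $\F_a\vert_{\mathcal O}$ at $x$ are the image of $V_x$ in $\g/\g^x\cong T_x\mathcal O$. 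For regular $x$ the vectors $\nabla p_i(x)$ span $\g^x\subseteq V_x$, so this image has dimension $\dim V_x-r$. Since $\F_a\vert_{\mathcal O}$ is automatically Poisson-commutative and $\dim\mathcal O=n-r$, and since $V_x$ is isotropic with $\dim V_x\le\tfrac{n+r}{2}$ at regular $x$, complete integrability on $\mathcal O$ is equivalent to $\dim V_x=\tfrac{n+r}{2}$ for generic $x\in\mathcal O$. Writing $\nabla p_i(x+\lambda a)=\sum_{k=0}^{m_i}\lambda^k v_{i,k}(x)$, the $\tfrac{n+r}{2}$ coefficients $v_{i,k}$ span $V_x$, so the goal becomes their linear independence.

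Because maximality of $\dim V_x$ is an open condition and $\tfrac{n+r}{2}$ is its ceiling on $\greg$, it suffices to exhibit on each regular orbit a single point where the $v_{i,k}$ are independent; by openness the equality then holds on a dense subset of the orbit. This is exactly where Kostant's theory enters: for a regular $\mathfrak{sl}_2$-triple $(e,h,f)$ the Slodowy slice $\Sreg=e+\g^f$ lies in $\greg$ and meets every regular orbit in a single point, via the isomorphism $\chi\vert_{\Sreg}\colon\Sreg\xrightarrow{\sim}\g/\!/G$ with $\chi=(p_1,\dots,p_r)$. Thus the theorem reduces to the clean assertion that $\dim V_x=\tfrac{n+r}{2}$ for every $x\in\Sreg$.

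To attack this on the slice I would exploit the $\ad_h$-grading $\g=\bigoplus_j\g(j)$, together with $\g^f$ being the sum of the lowest-weight lines of the $r$ irreducible $\mathfrak{sl}_2$-submodules of $\g$, and the contracting Kostant--Slodowy $\C^\times$-action $\rho_t$ preserving $\Sreg$ with $\lim_{t\to 0}\rho_t(x)=e$. Homogeneity of the gradients combined with $G$-equivariance yields
\[
\dim V_{\rho_t(x)}(a)=\dim V_x\bigl(\Ad_{\gamma(t)}a\bigr),
\]
where $\gamma(t)$ realizes $\rho_t$; this trades rescaling the base point for conjugating the shift, and with lower semicontinuity of $x\mapsto\dim V_x$ it should propagate maximality between the principal nilpotent and the remaining regular fibers, reducing a general slice point to a neighborhood of $e$. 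The main obstacle is precisely this base-and-boundary analysis: for a slice point whose shifted line $x+\C a$ is forced to meet $\gsing$, one must rule out a drop in the osculating dimension $\dim\sum_{\lambda}\g^{x+\lambda a}$. Here I expect the decisive inputs to be Kostant's explicit description of $\g^e$ and $\g^f$ in terms of the exponents and the classical fact that $\gsing$ has codimension three in $\g$, which together should confine the locus of bad base points to a proper subset of $\Sreg$ and force $\dim V_x=\tfrac{n+r}{2}$ throughout the slice.
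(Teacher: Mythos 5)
Your reduction is sound and runs parallel to the paper's own strategy up to the decisive step: by Bolsinov's description of the singular set, $\dim V_x=\tfrac{1}{2}(n+r)$ holds exactly when $x\notin\gsing+\C\cdot a$, and since the Slodowy slice of a regular $\mathfrak{sl}_2$-triple meets each regular orbit in a single point, everything hinges on showing that the slice is \emph{disjoint} from $\gsing+\C\cdot a$. That is precisely the step you leave open, and the tools you propose will not close it. A dimension count fails: $\gsing$ has codimension $3$, so $\gsing+\C\cdot a$ has codimension at least $2$ in $\g$, while the slice is an affine subspace of dimension $r$; for $r\geq 2$ nothing prevents an intersection on dimensional grounds. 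The contracting $\C^{\times}$-action only trades the base point for a varying shift direction $\Ad_{\gamma(t)}a$, and the limit lands at the nilpotent vertex with a degenerating direction, where maximality is exactly what is in question. Finally, confining the bad locus to a ``proper subset of $\Sreg$'' would not suffice for your own reduction: each regular orbit meets the slice in only \emph{one} point, so you need the bad locus in $\Sreg$ to be empty, not merely small, and semicontinuity within a single orbit gives you no second chance at a different slice point.

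The missing idea is that the $\mathfrak{sl}_2$-triple must be chosen \emph{adapted to} $a$. The paper fixes opposite Borel subalgebras with $a\in\mathfrak{b}_{+}$ and takes the principal triple $(\xi,h,\eta)$ with $\xi=\sum_{\alpha\in\Pi}e_{-\alpha}$ and $\eta$ a combination of positive simple root vectors; then $\mathrm{ker}(\mathrm{ad}_{\eta})\subseteq\mathfrak{b}_{+}$ (the $\mathrm{ad}_h$-eigenvalues on $\mathrm{ker}(\mathrm{ad}_{\eta})$ are non-positive, and $\mathfrak{b}_{+}$ is the non-positive part of the $\mathrm{ad}_h$-grading), so $S(\xi,h,\eta)+\C\cdot a\subseteq\xi+\mathfrak{b}_{+}$, and Kostant's lemma that $\xi+\mathfrak{b}_{+}\subseteq\greg$ forces the slice to avoid $\gsing+\C\cdot a$ entirely. (The paper phrases this contrapositively: if the orbit were everywhere singular for $\F_a$, it would lie in $\gsing+\C\cdot a$, yet it must meet the slice.) With this adapted choice your argument closes; without it, the assertion that $\dim V_x$ is maximal throughout an arbitrarily chosen slice is unsupported.
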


\section{Slodowy slices and $\mathfrak{sl}_2$-triples}
Our proof of Theorem \ref{Thm:MainThm} draws from Kostant's work on regular $\mathfrak{sl}_2$-triples and their Slodowy slices, the relevant parts of which we now review. Let all objects and notation be as described in Section \ref{Section: Introduction}, after the first paragraph. We recall that $(\xi,h,\eta)\in\mathfrak{g}^{\oplus 3}$ is called an $\mathfrak{sl}_2$\textit{-triple} if the identities
$$[h,\xi]=2\xi,\quad [h,\eta]=-2\eta,\quad [\xi,\eta]=h$$ hold in $\mathfrak{g}$. In this case, one may consider the associated \textit{Slodowy slice}
$$S(\xi,h,\eta):=\xi+\mathrm{ker}(\mathrm{ad}_{\eta}):=\{\xi+x:x\in\mathrm{ker}(\mathrm{ad}_{\eta})\}\subseteq\mathfrak{g}.$$ We will be particularly interested in those Slodowy slices that arise when $(\xi, h, \eta)$ is a \textit{regular} $\mathfrak{sl}_2$-triple, i.e. when $\xi$ is regular.

\begin{thm}\label{thm: Kostant}\emph{(cf. \cite[Theorem 8]{KostantLie})}
Let $(\xi,h,\eta)$ be a regular $\mathfrak{sl}_2$-triple. If $\mathcal{O}\subseteq\mathfrak{g}$ is any regular adjoint orbit, then $S(\xi,h,\eta)$ intersects $\mathcal{O}$ in a unique point. 
\end{thm}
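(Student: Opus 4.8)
The plan is to compare the slice with the fibres of the adjoint quotient. Write $p_1,\dots,p_r$ for a set of homogeneous generators of $\C[\g]^G$, and let $\chi:=(p_1,\dots,p_r):\g\to\C^r$ be the associated adjoint quotient map. Since the $p_i$ are $G$-invariant, $\chi$ is constant on each adjoint orbit; so if $\mathcal{O}\subseteq\g$ is a regular orbit, then $\chi(\mathcal{O})=\{c\}$ for a single point $c\in\C^r$. The theorem will follow once I establish three facts: (i) every element of $S:=S(\xi,h,\eta)$ is regular; (ii) the restriction $\chi\vert_S:S\to\C^r$ is an isomorphism; and (iii) the regular elements lying in a single fibre $\chi^{-1}(c)$ form exactly one adjoint orbit. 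Granting these, uniqueness is immediate: any point of $S\cap\mathcal{O}$ lies in $\chi^{-1}(c)$, and $\chi\vert_S$ is injective by (ii), so there is at most one such point. For existence, (ii) produces the unique $s\in S$ with $\chi(s)=c$, which is regular by (i); picking any $x_0\in\mathcal{O}$, both $s$ and $x_0$ are regular with $\chi(s)=c=\chi(x_0)$, so by (iii) they are conjugate and hence $s\in\mathcal{O}$.

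To prove (i), I would exploit the $\C^*$-action on $\g$ that contracts $S$ onto $\xi$. Grade $\g=\bigoplus_{j\in\mathbb{Z}}\g_j$ by the eigenvalues of $\ad_h$, and let $\gamma:\C^*\to G$ be the one-parameter subgroup with $\Ad_{\gamma(t)}$ acting as $t^j\cdot\Id$ on $\g_j$ (it exists because the $\ad_h$-weights are integers). Define $\sigma_t(x):=t^2\Ad_{\gamma(t^{-1})}(x)$, so that $\sigma_t$ acts as $t^{2-j}$ on $\g_j$. Then $\sigma_t(\xi)=\xi$, since $\xi\in\g_2$; and because $\ker(\ad_\eta)$ is spanned by lowest-weight vectors, which lie in strictly negative degrees, $\sigma_t$ preserves $S$ and satisfies $\lim_{t\to0}\sigma_t(s)=\xi$ for every $s\in S$. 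As each $\sigma_t$ is the composite of a scalar multiplication and an $\Ad$-action, it preserves $\dim\ker(\ad_{(-)})$; and the function $x\mapsto\dim\ker(\ad_x)$ is upper semicontinuous with minimal value $r$ attained exactly on $\greg$. Hence $\dim\ker(\ad_s)=\dim\ker(\ad_{\sigma_t(s)})\le\dim\ker(\ad_\xi)=r$ for every $s\in S$, forcing $s\in\greg$.

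For (ii), I would first check that $\chi\vert_S$ is étale at $\xi$. For regular $x$ the differentials $dp_1\vert_x,\dots,dp_r\vert_x$ are linearly independent and annihilate $\mathrm{im}(\ad_x)=[\g,x]$, so $\ker(d\chi_x)=\mathrm{im}(\ad_x)$. At $x=\xi$ the $\mathfrak{sl}_2$-theory gives $\g=\mathrm{im}(\ad_\xi)\oplus\ker(\ad_\eta)$, and since $T_\xi S=\ker(\ad_\eta)$, the map $d(\chi\vert_S)_\xi$ is an isomorphism. The action $\sigma_t$ of the previous paragraph is intertwined by $\chi$ with the linear $\C^*$-action on $\C^r$ of weights $2\deg p_i>0$, because the $p_i$ are homogeneous and $G$-invariant; both actions contract to a point. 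The étale locus of $\chi\vert_S$ is open, $\C^*$-stable, and contains the attracting fixed point $\xi$, hence equals all of $S$; a standard argument for such equivariant maps between affine spaces with contracting actions then upgrades this to the assertion that $\chi\vert_S$ is an isomorphism (this is Kostant's section theorem).

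The essential point, and the step I expect to be the main obstacle, is (iii). Given regular $x,x'$ with $\chi(x)=\chi(x')$, I would pass to Jordan decompositions $x=s+n$ and $x'=s'+n'$. Since the semisimple part lies in the closure of the orbit and $\chi$ is constant on orbit closures, $\chi(s)=\chi(x)=\chi(x')=\chi(s')$, so $s$ and $s'$ are conjugate semisimple elements; after conjugating we may assume $s=s'=:s_0$. The centralizer $\mathfrak{c}:=\ker(\ad_{s_0})$ is reductive of rank $r$, and a short eigenspace computation shows $\ker(\ad_{s_0+n})=\ker(\ad_n\vert_{\mathfrak{c}})$, so that $s_0+n$ is regular in $\g$ precisely when $n$ is a regular nilpotent element of $\mathfrak{c}$; the same holds for $n'$. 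Because $G$ is simply connected, the centralizer $L:=G^{s_0}$ is connected, and the regular nilpotent elements of the reductive algebra $\mathfrak{c}$ form a single $L$-orbit; hence $n$ and $n'$ are conjugate by an element fixing $s_0$, giving $x\sim x'$. Combining (i)--(iii) as in the first paragraph completes the proof.
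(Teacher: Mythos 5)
The paper offers no proof of this theorem --- it is imported wholesale from Kostant via the citation to \cite{KostantLie} --- so there is no in-house argument to compare against; what you have written is a correct, essentially self-contained reconstruction of Kostant's section theorem along the standard lines, and your three-step skeleton (regularity of the slice, the adjoint quotient restricting to an isomorphism on the slice, and each regular fibre of $\chi$ being a single orbit) is precisely how the result is proved in the literature. The individual steps check out: for a \emph{regular} triple the exponents of $\g$ are all at least $1$, so $\mathrm{ker}(\mathrm{ad}_\eta)$ really does sit in $\mathrm{ad}_h$-degrees $\leq -2$ and your $\sigma_t$ contracts $S$ onto $\xi$ with strictly positive weights (this is the one place where regularity of the triple is silently essential --- for a non-regular nilpotent there are summands in degree $0$ and the same contraction argument needs the weaker statement that the weights on $\mathrm{ker}(\mathrm{ad}_\eta)$ are merely non-positive); the identity $\mathrm{ker}(\mathrm{ad}_{s_0+n})=\mathrm{ker}(\mathrm{ad}_{s_0})\cap\mathrm{ker}(\mathrm{ad}_n)$ is exactly the Jordan decomposition of the operator $\mathrm{ad}_{s_0+n}$; and the deduction ``open, $\C^*$-stable, contains the attracting fixed point, hence everything'' is sound because every $\sigma$-orbit closure in $S$ contains $\xi$. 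You do lean on several nontrivial imported facts without proof --- Kostant's theorem that $\d p_1,\ldots,\d p_r$ are independent exactly on $\greg$ (which this paper also invokes as Theorem 7 of \cite{KostantLie}), the Chevalley restriction theorem to conjugate semisimple elements with equal invariants, Steinberg's connectedness of centralizers of semisimple elements in simply connected groups, the fact that regular nilpotents in a reductive algebra form one orbit, and the equivariant-contraction lemma upgrading \'etale to isomorphism --- but all of these are standard, are invoked correctly, and are of the same vintage as the theorem itself, so the argument stands as a legitimate proof rather than a circular appeal to the result being proved.
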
 

We will benefit from constructing a specific regular $\mathfrak{sl}_2$-triple. To this end, let $\mathfrak{b}_{+},\mathfrak{b}_{-}\subseteq\mathfrak{g}$ be opposite Borel subalgebras. Note that $\mathfrak{h}:=\mathfrak{b}_{+}\cap\mathfrak{b}_{-}$ is then a Cartan subalgebra of $\mathfrak{g}$, and that we have roots $\Delta\subseteq\mathfrak{h}^*$. Given $\alpha\in\Delta$, let $\mathfrak{g}_{\alpha}$ denote the $\alpha$-root space in $\mathfrak{g}$, i.e.
$$\mathfrak{g}_{\alpha}:=\{x\in\mathfrak{g}:\mathrm{ad}_y(x)=\alpha(y)x\text{ for all }y\in\mathfrak{h}\}.$$ We may then define collections of positive roots $\Delta_{+}\subseteq\Delta$ and negative roots $\Delta_{-}\subseteq\Delta$ by the condition that $$\mathfrak{b}_{\pm}=\mathfrak{h}\oplus\bigoplus_{\alpha\in\Delta_{\pm}}\mathfrak{g}_{\alpha}$$
as $\mathfrak{h}$-modules. Let $\Pi\subseteq\Delta_{+}$ denote the resulting collection of simple roots. For each $\alpha\in\Pi$, let $h_{\alpha}\in\mathfrak{h}$ be the corresponding simple coroot and choose elements $e_{\alpha}\in\mathfrak{g}_{\alpha}$ and $e_{-\alpha}\in\mathfrak{g}_{-\alpha}$ such that $[e_{\alpha},e_{-\alpha}]=h_{\alpha}$. Let us define $h$ to be the unique element of $\mathfrak{h}$ satisfying $\alpha(h)=-2$ for all $\alpha\in\Pi$. Noting that the simple coroots form a basis of $\mathfrak{h}$, we may write 
$$-h=\sum_{\alpha\in\Pi}c_{\alpha}h_{\alpha}$$
for uniquely determined coefficients $c_{\alpha}\in\mathbb{C}$. Now consider the nilpotent elements of $\mathfrak{g}$ defined by
$$\xi:=\sum_{\alpha\in\Pi}e_{-\alpha} \qquad \text{and} \qquad \eta:=\sum_{\alpha\in\Pi}c_{\alpha}e_{\alpha}.$$ It is then straightrforward to verify that $(\xi,h,\eta)$ is an $\mathfrak{sl}_2$-triple. Since $\xi$ is known to be a regular element (see \cite[Theorem 5.3]{KostantTDS}), we see that $(\xi,h,\eta)$ is regular.  

The following lemma will be needed to help prove the main result of our paper.
\begin{lem}\label{lem: containment}
If $(\xi,h,\eta)$ is the $\mathfrak{sl}_2$-triple constructed above, then $\mathrm{ker}(\mathrm{ad}_{\eta})\subseteq\mathfrak{b}_{+}$. 
\end{lem}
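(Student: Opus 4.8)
The plan is to exploit the $\mathfrak{sl}_2$-representation theory that the triple $(\xi,h,\eta)$ imposes on $\mathfrak{g}$ via the adjoint action, together with an explicit identification of $\mathfrak{b}_{+}$ as a sum of $\mathrm{ad}_h$-eigenspaces.

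First I would record the grading $\mathfrak{g}=\bigoplus_{j\in\mathbb{Z}}\mathfrak{g}^{(j)}$, where $\mathfrak{g}^{(j)}:=\{x\in\mathfrak{g}:[h,x]=jx\}$; this decomposition exists because $h\in\mathfrak{h}$, so that $\mathrm{ad}_h$ is semisimple and preserves the root space decomposition. I would then compute the weights explicitly. Since $\alpha(h)=-2$ for every simple root $\alpha$, a positive root $\beta=\sum_{\alpha\in\Pi}n_{\alpha}\alpha$ (with all $n_{\alpha}\geq 0$) satisfies $\beta(h)=-2\sum_{\alpha}n_{\alpha}\leq -2$, while $\mathfrak{h}$ sits in weight $0$ and every negative root space has strictly positive weight. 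Consequently $\mathfrak{b}_{+}=\mathfrak{h}\oplus\bigoplus_{\beta\in\Delta_{+}}\mathfrak{g}_{\beta}=\bigoplus_{j\leq 0}\mathfrak{g}^{(j)}$; that is, $\mathfrak{b}_{+}$ is precisely the sum of the nonpositive $\mathrm{ad}_h$-eigenspaces.

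Next I would bring in the $\mathfrak{sl}_2$-module structure. The relations $[h,\xi]=2\xi$, $[h,\eta]=-2\eta$, and $[\xi,\eta]=h$ identify $\xi$ with the raising operator and $\eta$ with the lowering operator of the standard $\mathfrak{sl}_2$. Decomposing $\mathfrak{g}$ into irreducible $\mathfrak{sl}_2$-submodules and recalling that the weights of an $(m+1)$-dimensional irreducible are $-m,-m+2,\dots,m$, the lowering operator $\mathrm{ad}_{\eta}$ annihilates exactly the lowest-weight lines, which lie in $\mathrm{ad}_h$-weight $-m\leq 0$. Hence $\ker(\mathrm{ad}_{\eta})$ is spanned by vectors of nonpositive $\mathrm{ad}_h$-weight, so that $\ker(\mathrm{ad}_{\eta})\subseteq\bigoplus_{j\leq 0}\mathfrak{g}^{(j)}=\mathfrak{b}_{+}$, as desired. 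Equivalently, one can avoid the global decomposition and argue that $\mathrm{ad}_{\eta}\colon\mathfrak{g}^{(j)}\to\mathfrak{g}^{(j-2)}$ is injective for $j>0$, which is the same $\mathfrak{sl}_2$ fact applied weight space by weight space.

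The one place demanding care is the second step: verifying that the particular $h$ chosen here, normalized by $\alpha(h)=-2$ rather than the more common $+2$, really makes $\mathfrak{b}_{+}$ the nonpositive weight space. Once the sign convention is pinned down through the height computation above, the $\mathfrak{sl}_2$-theoretic input is entirely standard and the containment follows at once. I note in particular that regularity of the triple plays no role in this lemma; only the compatibility of $h$ with the fixed Borel subalgebra $\mathfrak{b}_{+}$ is used.
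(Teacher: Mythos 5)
Your proof is correct and follows essentially the same route as the paper: identify $\mathfrak{b}_{+}$ as the sum of the non-positive $\mathrm{ad}_h$-eigenspaces using $\alpha(h)=-2$, and then invoke $\mathfrak{sl}_2$-representation theory to see that $\ker(\mathrm{ad}_{\eta})$, being spanned by lowest-weight vectors, sits in non-positive $\mathrm{ad}_h$-weight. You simply spell out the height computation and the weight-by-weight injectivity of $\mathrm{ad}_{\eta}$ in more detail than the paper does.
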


\begin{proof}
Since $\alpha(h)=-2$ for all $\alpha\in\Pi$, it follows that $\mathfrak{b}_{+}$ is the sum of those $\mathrm{ad}_h$-eigenspaces corresponding to non-positive eigenvalues. At the same time, the representation theory of $\mathfrak{sl}_2$ implies that $h$ acts on $\mathrm{ker}(\mathrm{ad}_{\eta})$ with non-positive eigenvalues. We conclude that $\mathrm{ker}(\mathrm{ad}_{\eta})$ is contained in $\mathfrak{b}_{+}$.  
\end{proof}

\section{Proof of Theorem \ref{Thm:MainThm}}
Let all objects be as described in the statement of Theorem \ref{Thm:MainThm} and the paragraphs preceding it. Our objective is to prove that the Poisson-commutative algebra $\mathcal{F}_a\vert_{\mathcal{O}}$ is complete, i.e. that
the subspace $$\d_x(\mathcal{F}_a\vert_{\mathcal{O}}):=\{\d_x f:f\in\mathcal{F}_a\vert_{O}\}\subseteq T_x^*\mathcal{O}$$ has dimension $\frac{1}{2}\dim(\mathcal{O})=\frac{1}{2}(n-r)$ for all $x$ in an open dense subset of $\mathcal{O}$. To this end, consider the set\begin{equation}\label{Equation:Sing}
\mathrm{Sing}_a(\mathcal{O}):=\bigg\{x\in\mathcal{O}:\dim(\d_x(\mathcal{F}_a\vert_{\mathcal{O}}))<\frac{1}{2}(n-r)\bigg\}
\end{equation}
of singularities of $\mathcal{F}_a\vert_{\mathcal{O}}$. Note that $\mathcal{F}_a\vert_{\mathcal{O}}$ is complete if and only if $\mathcal{O}\setminus\mathrm{Sing}_a(\mathcal{O})$ is open and dense in $\mathcal{O}$. At the same time, $\mathcal{O}$ is irreducible and contains $\mathrm{Sing}_a(\mathcal{O})$ as a Zariski-closed subset. This means that either $\mathcal{O}=\mathrm{Sing}_a(\mathcal{O})$ or every irreducible component of 
$\mathrm{Sing}_a(\mathcal{O})$ has strictly positive codimension in $\mathcal{O}$, in which case $\mathcal{O}\setminus\mathrm{Sing}_a(\mathcal{O})$ is necessarily open and dense in $\mathcal{O}$. Accordingly, $\mathcal{O}\setminus\mathrm{Sing}_a(\mathcal{O})$ is open and dense in $\mathcal{O}$ if and only if $\mathcal{O}\neq\mathrm{Sing}_a(\mathcal{O})$. 

In light of the above-mentioned equivalences, we are reduced to proving that $\mathcal{O}\neq\mathrm{Sing}_a(\mathcal{O})$. Let us assume that $\mathcal{O}=\mathrm{Sing}_a(\mathcal{O})$, for the sake of an argument by contradiction. Now recall that $\C[\g]^G$ is generated by $r$ homogeneous, algebraically independent polynomials $f_1,\dots,f_r\in \C[\g]^G$. Set $d_i:=\mathrm{degree}(f_i)$, $i\in\{1,\ldots,r\},$ and recall that $\sum_{i=1}^r d_i = \frac{1}{2}(n+r) =:\ell$ (see \cite[Equation (1)]{Varadarajan}). We define new polynomials $f_{ij}\in \C[\g]$, $i\in\{1,\dots,r\}$, $j\in\{0,\dots,d_i-1\},$ by the following expansions:
$$
f_i(x+\lambda a)=\sum_{j=0}^{d_i-1}f_{ij}(x)\lambda^j+f_i(a)\lambda^{d_i},\quad i\in\{1,\dots,r\},
$$
where $x\in\mathfrak{g}$ and $\lambda\in\mathbb{C}$.
The $\ell$-many polynomials $f_{ij}$ turn out to form a list of algebraically independent generators of $\mathcal{F}_a$ (see \cite[Section 3]{PanyushevYakimova}, for instance). Note also that $f_{i0}=f_i$ for all $i\in\{1,\ldots,r\}$, so that we have introduced only $\ell-r=\frac{1}{2}(n-r)$ new polynomials. Let us record these new polynomials as $f_{r+1},\ldots,f_{\ell}$.

Suppose that $x\in\mathcal{O}$. Since $x\in\mathrm{Sing}_a(\mathcal{O})$, it follows that the $\frac{1}{2}(n-r)$ vectors $\d_x(f_{r+1}\vert_{\mathcal{O}}),\ldots,\d_x(f_{\ell}\vert_{\mathcal{O}})$ are linearly dependent in $T_x^*\mathcal{O}$. This is the statement that $$\sum_{i=r+1}^{\ell}a_i(\d_x(f_{i}\vert_{\mathcal{O}}))=0$$ for some $a_{r+1},\ldots,a_{\ell}\in\mathbb{C}$, not all equal to $0$.
Now note that $f_1,\ldots,f_r$ are constant on $\mathcal{O}$, so that the annihilator of $T_x\mathcal{O}$ in $T_x^*\mathfrak{g}$ must contain $$U:=\mathrm{span}\{\d_x f_1, \ldots, \d_x f_{r}\}.$$ This annihilator is $r$-dimensional, since the regularity of $\mathcal{O}$ implies that $T_x\mathcal{O}$ has dimension $n-r$. At the same time, the fact that $x\in\greg$ implies that $U$ is $r$-dimensional (see \cite[Theorem 7]{KostantLie}). We conclude $U$ is precisely the annihilator of $T_x\mathcal{O}$ in $T_x^*\mathfrak{g}$, so that $$\sum_{i=r+1}^{\ell}a_i(\d_xf_{i})\in U.$$ Given how $U$ is defined, this means that $\d_x f_1, \ldots, \d_x f_{\ell}$ must be linearly dependent in $T_x^*\mathfrak{g}$. Now let $\mathrm{Sing}(\mathcal F_a)$ denote the set of singularities of $\mathcal{F}_a$ in $\mathfrak{g}$, which one defines analogously to \eqref{Equation:Sing}. Since $f_1,\ldots,f_{\ell}$ are algebraically independent generators of $\mathcal{F}_a$, we see that $\mathrm{Sing}(\mathcal{F}_a)$ is the set of points at which $\d f_1,\ldots,\d f_{\ell}$ are linearly dependent. We also have that $\mathrm{Sing}(\mathcal{F}_a)=\gsing+\mathbb{C}\cdot a$ (see \cite[Proposition 3.1]{Bolsinov}), where $\gsing:=\g\setminus\greg$ is the set of singular elements in $\g$. Using these last two sentences, we conclude that $x\in\gsing+\mathbb{C}\cdot a$. In particular, this paragraph establishes the inclusion  
\begin{equation}\label{Equation: Inclusion}\mathcal{O}\subseteq\mathfrak{g}_{\text{sing}}+\mathbb{C}\cdot a.\end{equation}

Now choose opposite Borel subalgebras $\mathfrak{b}_{+},\mathfrak{b}_{-}\subseteq\mathfrak{g}$ with the property that $a\in\mathfrak{b}_{+}$. Beginning with these opposite Borel subalgebras, we may repeat all of the constructions outlined between Theorem \ref{thm: Kostant} and Lemma \ref{lem: containment} to produce a specific $\mathfrak{sl}_2$-triple $(\xi,h,\eta)$. Recall that this triple is regular, so that Theorem \ref{thm: Kostant} implies that the Slodowy slice $S(\xi,h,\eta)$ must intersect $\mathcal{O}$. It now follows from \eqref{Equation: Inclusion} that $S(\xi,h,\eta)$ intersects $\mathfrak{g}_{\text{sing}}+\mathbb{C}\cdot a$. In other words, there exist $x\in\mathfrak{g}_{\text{sing}}$, $\lambda\in\mathbb{C}$, and $y\in\mathrm{ker}(\mathrm{ad}_{\eta})$ such that $x+\lambda a=\xi+y$, i.e.
$$x=\xi+y-\lambda a.$$ Since $y\in\mathfrak{b}_{+}$ (by Lemma \ref{lem: containment}) and $a\in\mathfrak{b}_{+}$, we conclude that $x\in\xi+\mathfrak{b}_{+}.$  At the same time, it is known that $\xi+\mathfrak{b}_{+}\subseteq\mathfrak{g}_{\text{reg}}$ (see \cite[Lemma 10]{KostantLie}). The previous two sentences imply that $x\in\mathfrak{g}_{\text{reg}}$, contradicting the fact that $x\in\mathfrak{g}_{\text{sing}}$. In particular, $\mathcal{O}=\mathrm{Sing}_a(\mathcal{O})$ cannot hold.  This completes the proof.

\subsection*{Acknowledgements}
The authors gratefully acknowledge Alexey Bolsinov for feedback and several valuable discussions. We also wish to thank Anton Izosimov for constructive conversations.

\bibliographystyle{acm} 
\bibliography{MF}

\end{document}